\theoremstyle{plain}
\newtheorem{thm}{Theorem}[section]
\newtheorem*{thm*}{Theorem}
\newtheorem*{prop*}{Proposition}
\newtheorem*{cor*}{Corollary}
\newtheorem*{lem*}{Lemma}
\theoremstyle{definition}
\newtheorem*{defn*}{Definition}
\newtheorem*{exmps*}{Examples}
\newtheorem*{exmp*}{Example}
\newtheorem*{exerc*}{Exercise}
\newtheorem*{rems*}{Remarks}
\newtheorem*{rem*}{Remark}
\newcommand{\N}{{\mathbb N}}
\DeclareMathOperator{\spa}{span}
\begin{document}
\title[On a characterization of finite-dimensional vector spaces]
{On a characterization\\ of finite-dimensional vector spaces}
\author[Marat V. Markin]{Marat V. Markin}
\address{
Department of Mathematics\newline
California State University, Fresno\newline
5245 N. Backer Avenue, M/S PB 108\newline
Fresno, CA 93740-8001, USA
}
\email{mmarkin@csufresno.edu}
\subjclass{Primary 15A03, 15A04; Secondary 15A09, 15A15}
\keywords{Linear operator, vector space, Hamel basis}
\begin{abstract}
We provide a characterization of the finite dimensionality of vector spaces in terms of the right-sided invertibility of linear operators on them.
\end{abstract}
\maketitle

\section{Introduction}

In paper \cite{Markin2005}, found is a characterization of one-dimensional (real or complex) normed algebras in terms 
of the bounded linear operators on them, echoing the celebrated \textit{Gelfand-Mazur theorem} charachterizing complex one-dimensional Banach algebras (see, e.g., \cite{Bach-Nar,Gelfand39,Gelfand41,Naimark,Rickart1958}).

Here, continuing along this path, we provide a simple characterization of the finite dimensionality of vector spaces in terms of the right-sided invertibility of linear operators on them.

\section{Preliminaries}

As is well-known (see, e.g., \cite{Horn-Johnson,ONan}), a square matrix $A$ with complex entries is invertible \textit{iff} it is one-sided invertible, i.e., there exists
a square matrix $C$ of the same order as $A$ such that
\[
AC = I\ \text{(\textit{right inverse})}\quad \text{or}\quad CA=I\ \text{(\textit{left inverse})},
\]
where $I$ is the \textit{identity matrix} of an appropriate size, in which case $C$ is the (two-sided)  inverse of $A$, i.e.,
\[
AC=CA=I.
\]

Generally, for a linear operator on a (real or complex) vector space, the existence of a \textit{left inverse} implies being \textit{invertible}, i.e., \textit{injective}. Indeed, let $A:X\to X$ be a linear operator on a (real or complex) vector space $X$ and a linear operator $C:X\to X$ be its \textit{left inverse}, i.e.,
\begin{equation}\label{cfdvs2}
CA=I,
\end{equation}
where $I$ is the \textit{identity operator} on $X$. Equality \eqref{cfdvs2}, obviously, implies that 
\[
\ker A=\left\{0\right\},
\]
and hence, there exists an inverse $A^{-1}:R(A)\to X$ for the operator $A$, where $R(A)$ is its range (see, e.g., \cite{Markin2020EOT}). Equality \eqref{cfdvs2} also implies that the inverse operator $A^{-1}$ is the restriction of $C$ to $R(A)$.

Further, as is easily seen, for a linear operator on a (real or complex) vector space, the existence of a \textit{right inverse}, i.e., a linear operator $C:X\to X$ such that
\begin{equation*}
AC=I,
\end{equation*}
immediately implies being \textit{surjective}, which, provided the underlying vector space is \textit{finite-dimensional}, by the \textit{rank-nullity theorem} (see, e.g., \cite{Markin2018EFA,Markin2020EOT}), is equivalent to being \textit{injective}, i.e., being \textit{invertible}.

With the underlying space being \textit{infinite-dimensional}, the arithmetic of infinite cardinals does not allow to directly infer by the \textit{rank-nullity theorem} that the \textit{surjectivity} of a linear operator on the space is equivalent to its \textit{injectivity}. In this case the right-sided invertibility for linear operators need not imply invertibility. For instance, on the (real or complex) \textit{infinite-dimensional} vector space $l_\infty$ of bounded sequences, the  left shift linear operator
\[
l_\infty\ni x:=(x_1,x_2,x_3,\dots)
\mapsto Lx:=(x_2,x_3,x_4,\dots)\in l_\infty
\]
is \textit{non-invertible} since
\[
\ker L=\left\{(x_1,0,0,\dots) \right\}\neq \left\{0\right\}
\]
(see, e.g., \cite{Markin2018EFA,Markin2020EOT}), but the right shift linear operator
\[
l_\infty\ni x:=(x_1,x_2,x_3,\dots)
\mapsto Rx:=(0,x_1,x_2,\dots)\in l_\infty
\]
is its \textit{right inverse}, i.e.,
\[
LR=I,
\]
where $I$ is the \textit{identity operator} on $l_\infty$.

Not only does the above example give rise to the natural question of whether, when the right-sided invertibility for linear operators on a (real or complex) vector space implies their invertibility, i.e., \textit{injectivity}, the underlying space is necessarily \textit{finite-dimensional} but also serve as an inspiration for proving the \textit{``if''} part of the subsequent characterization.

\section{Characterization}

\begin{thm}[Characterization of Finite-Dimensional Vector Spaces]\ \\
A (real or complex) vector space $X$ is finite-dimensional
iff, for linear operators on $X$, right-sided invertibility 
implies invertibility.
\end{thm}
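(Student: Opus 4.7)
The plan is to prove the two implications separately. The forward (``only if'') direction is essentially cited from the Preliminaries: if $X$ is finite-dimensional and a linear operator $A:X\to X$ has a right inverse $C$, then $AC=I$ forces $A$ to be surjective, and the rank-nullity theorem (as invoked above) yields injectivity, and hence invertibility.

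For the reverse (``if'') direction I would proceed by contraposition, constructing, on any infinite-dimensional $X$, a non-injective linear operator that nevertheless admits a right inverse; this generalizes the left-shift $L$ on $l_\infty$ described in the Preliminaries, which is precisely the blueprint the author suggests. Fix a Hamel basis $B$ of $X$ (existence by Zorn's lemma). Since $\dim X=\infty$, the set $B$ is infinite, so I may extract a countably infinite subset $\{e_n\}_{n\in\N}\subseteq B$ and set $B':=B\setminus\{e_n:n\in\N\}$. I then define linear operators $A,C:X\to X$ by prescribing their action on $B$ and extending by linearity:
\[
Ae_1:=0,\qquad Ae_n:=e_{n-1}\ (n\ge 2),\qquad Ab:=b\ (b\in B'),
\]
\[
Ce_n:=e_{n+1}\ (n\in\N),\qquad Cb:=b\ (b\in B').
\]
A direct check on every element of $B$ gives $ACe_n=Ae_{n+1}=e_n$ and $ACb=Ab=b$, so $AC=I$, i.e., $C$ is a right inverse of $A$. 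On the other hand $Ae_1=0$ with $e_1\neq 0$, so $\ker A\neq\{0\}$ and $A$ is not invertible, contradicting the hypothesis; hence $X$ must be finite-dimensional.

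I do not anticipate a serious obstacle: the forward direction is immediate from the Preliminaries, and the reverse direction reduces to the standard Hamel-basis extension technique, whose one nonelementary ingredient is the appeal to the axiom of choice to secure $B$. The only step that deserves care is the verification that $A$ and $C$ are well-defined linear operators on all of $X$ (since $B$ need not be countable, one must invoke the universal property of a Hamel basis rather than any series-type convergence), and that $AC=I$ globally — but both follow routinely because the equality $AC=I$ is already established on the spanning set $B$.
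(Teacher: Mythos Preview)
Your proposal is correct, and the \textit{``if''} part is essentially identical to the paper's: both proceed by contraposition, extract a countably infinite subset of a Hamel basis, and build the left-shift/right-shift pair on that subset (fixing the remaining basis vectors) to exhibit a non-injective operator with a right inverse.

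The only difference is in the \textit{``only if''} part. You invoke surjectivity of $A$ from $AC=I$ and then the rank-nullity theorem to get injectivity---exactly the argument the paper sketches in its Preliminaries. In the proof proper, however, the author instead passes to matrix representations relative to a basis, uses $[A]_B[C]_B=I_n$ and multiplicativity of the determinant to conclude $\det([A]_B)\neq 0$, hence $A$ is invertible. Your route is shorter and avoids determinants; the paper's route makes the finite-dimensional hypothesis visible through the existence of a square matrix representation. Both are standard and equally valid.
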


\begin{proof}\

\textit{``Only if''} part. Suppose that the vector space $X$ is \textit{finite-dimensional} with $\dim X=n$ ($n\in \N$) and let $B:=\left\{x_1,\dots,x_n\right\}$ be an ordered basis for $X$.

For an arbitrary linear operator $A:X\to X$ on $X$, which has
a \textit{right inverse}, i.e., a linear operator $C:X\to X$ such that
\begin{equation*}
AC=I,
\end{equation*}
where $I$ is the \textit{identity operator} on $X$, let
$[A]_B$ and $[C]_B$ be the \textit{matrix representations} of
the operators $A$ and $C$ relative to the basis $B$, respectively (see, e.g., \cite{Horn-Johnson,ONan}).

Then
\begin{equation}\label{cfdvs1}
[A]_B[C]_B=I_n,
\end{equation}
where $I_n$ is the \textit{identity matrix} of size $n$
(see, e.g., \cite{Horn-Johnson,ONan}).

By the \textit{multiplicativity of determinant} (see, e.g, \cite{Horn-Johnson,ONan}), equality \eqref{cfdvs1}
implies that
\[
\det\left([A]_B\right)\det\left([C]_B\right)=\det\left([A]_B[C]_B\right)=\det(I_n)=1.
\]

Whence, we conclude that
\[
\det\left([A]_B\right)\neq 0,
\]
which, by the \textit{determinant characterization of invertibility}, implies that the matrix $[A]_B$ is invertible, and hence, so is the operator $A$ (see, e.g., \cite{Horn-Johnson,ONan}).

\textit{``If''} part. Let us prove this part \textit{by contrapositive}, assuming that the vector space $X$ is \textit{infinite-dimensional}. Suppose that $B:=\left\{x_i\right\}_{i\in I}$ is a
(Hamel) basis for $X$ (see, e.g., \cite{Markin2018EFA,Markin2020EOT}), where $I$ is an infinite indexing set, and that
$J:=\left\{i(n)\right\}_{n\in \N}$ is a \textit{countably infinite} subset of $I$. 

Let us define a linear operator $A:X\to X$ as follows:
\[
Ax_{i(1)}:=0,\ Ax_{i(n)}:=x_{i(n-1)},\ n\ge 2,\quad Ax_i:=x_i,\ i\in I\setminus J,
\]
and
\[
X\ni x=\sum_{i\in I}c_ix_i\mapsto Ax:=\sum_{i\in I}c_iAx_i,
\]
where 
\[
\sum_{i\in I}c_ix_i
\]
is the \textit{basis representation} of a vector $x\in X$ relative to $B$, in which all but a finite number of the coefficients $c_i$, $i\in I$, called the \textit{coordinates} of $x$ relative to $B$, are zero (see, e.g., \cite{Markin2018EFA,Markin2020EOT}).

As is easily seen, $A$ is a linear operator on $X$, which is \textit{non-invertible}, i.e., \textit{non-injective}, since
\[
\ker A=\spa\left(\left\{x_{i(1)}\right\}\right)\neq \left\{0\right\}.
\]

The linear operator $C:X\to X$ on $X$ defined as follows:
\[
Cx_{i(n)}:=x_{i(n+1)},\ n\in\N,\quad Cx_i:=x_i,\ i\in I\setminus J,
\]
and
\[
X\ni x=\sum_{i\in I}c_ix_i\mapsto Cx:=\sum_{i\in I}c_iCx_i,
\]
is a \textit{right inverse} for $A$ since
\[
ACx_{i(n)}=Ax_{i(n+1)}=x_{i(n)},\ n\in \N,\quad ACx_i=Ax_i=x_i,\ i\in I\setminus J.
\]

Thus, on a (real or complex) infinite-dimensional vector space, there exists a non-invertible linear operator with a right inverse, which completes the proof of the \textit{``if''} part, and hence, of the entire statement.
\end{proof}  



\begin{thebibliography}{99}
\bibitem{Bach-Nar}
{G. Bachman and L. Narici},	
\textit{Functional Analysis},	
{2nd ed.},	
{Dover Publications, Inc.},	
{Mineola, New York},		
{2000}.		
\bibitem{Gelfand39}
{I.M. Gelfand},	
\textit{On normed rings},	
{Dokl. Akad. Nauk SSSR}	
\textbf{23}	
{(1939)},	
{430--432}	
{(Russian)}.
\bibitem{Gelfand41}
{\bysame},	
\textit{Normierte Ringe},	
{Matem. Sbornik}	
\textbf{9}	
{(1941)},	
{3--24}.	
\bibitem{Horn-Johnson}
{R.A. Horn and C.R. Johnson},	
\textit{Matrix Analysis}, 
{Cambridge University Press},	
{New York},		
{1986}.		
\bibitem{Markin2005}
{M.V. Markin},	
\textit{A Gelfand-Mazur type theorem for normed algebras},	
{Methods Funct. Anal. Topology}	
\textbf{11}	
{(2005)},	
{no.~1},	
{63--64}.	
\bibitem{Markin2018EFA}
{\bysame},	
\textit{Elementary Functional Analysis}, 
{De Gruyter Graduate},	
{Walter de Gruyter GmbH},	
{Berlin/Boston},		
{2018}. 
\bibitem{Markin2020EOT}
{\bysame},	
\textit{Elementary Operator Theory}, 
{De Gruyter Graduate},	
{Walter de Gruyter GmbH},	
{Berlin/Boston}	
{2020}.	
\bibitem{Naimark}
{M.A. Naimark},	
\textit{Normed Algebras},	
{Wolters-Noordhoff Publishing},	
{Groningen},		
{1972}.		
\bibitem{ONan}
{M. O'Nan},	
\textit{Linear Algebra},	
{2nd ed.},	
{Harcourt Brace Jovanovich, Inc.},	
{New York},		
{1976}.		
\bibitem{Rickart1958}
{C.E. Rickart},	
\textit{An elementary proof of a fundamental theorem in the theory of Banach algebras},	
{Michigan Math. J.}	
\textbf{5}	
{(1958)},	
{75--78}.	
\end{thebibliography}
\end{document}